\newtheorem{thm}{Theorem}
\newtheorem{remark}{Remark}
\theoremstyle{remark}
\theoremstyle{definition}
\newcommand{\R}{\mathbb{R}}
\newcommand{\N}{\mathbb{N}}
\newcommand{\E}{\mathbb{E}}
\newcommand{\eps}{{\varepsilon}}
\newcommand{\rd}{\,{\rm d}}
\newcommand{\bsx}{{\boldsymbol x}}
\newcommand{\bst}{{\boldsymbol t}}
\newcommand{\bszero}{{\boldsymbol 0}}
\newcommand{\cP}{\mathcal{P}}
\newcommand{\cA}{\mathcal{A}}
\newcommand{\nav}{{{\rm n\text{-}av}}}
\newcommand{\av}{{{\rm av}}}
\title{Generalized Discrepancy of Random Points}  
\author{Erich Novak and Friedrich Pillichshammer}
\date{\today}
\begin{document}
\maketitle

\begin{abstract}
We study the \(L_p\)-discrepancy of random point sets in high dimensions, 
with emphasis on small values of \(p\). Although the classical 
\(L_p\)-discrepancy suffers from the curse of dimensionality for all 
\(p \in (1,\infty)\), the gap between known upper and lower bounds remains 
substantial, in particular for small $p \ge 1$. 
To clarify this picture, we review the existing results for 
i.i.d.\ uniformly distributed points and derive new upper bounds for 
\emph{generalized} \(L_p\)-discrepancies, obtained by allowing non-uniform 
sampling densities and corresponding non-negative quadrature weights.

Using the probabilistic method, we show that random points drawn from 
optimally chosen product densities lead to significantly improved upper bounds. 
For \(p=2\) these bounds are explicit and optimal; for general 
\(p \in [1,\infty)\) we obtain sharp asymptotic estimates. The improvement can 
be interpreted as a form of importance sampling for the underlying Sobolev 
space \(F_{d,q}\).

Our results also reveal that, even with optimal densities, the curse of 
dimensionality persists for random points when \(p\ge1\), and it becomes most 
pronounced for small \(p\). This suggests that the curse should also hold for 
the classical \(L_1\)-discrepancy for deterministic point sets.
\end{abstract} 

\centerline{\begin{minipage}[hc]{130mm}{
{\em Keywords:} Numerical integration, worst-case error, discrepancy, 
curse of dimensionality, quasi-Monte Carlo\\
{\em MSC 2010:} 65C05, 65Y20, 11K38}
\end{minipage}}

\section{Introduction}
\label{sec:intro}

The $L_p$-discrepancy of point sets in $[0,1]^d$ is one of the central
quantities in quasi-Monte Carlo (QMC) theory. It measures the worst-case
integration error for functions belonging to the Sobolev space underlying the
Koksma-Hlawka inequality. In this paper we study the generalized
$L_p$-discrepancy associated with the multivariate Sobolev space $F_{d,q}$,
where $p$ and $q$ are Hölder conjugates. This space consists of functions that
are once differentiable in each coordinate, whose mixed derivative lies in
$L_q([0,1]^d)$, and which satisfy the boundary condition $f(\bsx)=0$ whenever at
least one coordinate of $\bsx \in [0,1]^d$ equals~$1$. These spaces appear naturally in
multivariate integration and tractability theory.

A classical result (see, e.g., \cite{NW10}) states that for QMC algorithms
with equal weights, the worst-case error in $F_{d,q}$ equals the classical
$L_p$-discrepancy of the underlying point set. For general linear algorithms
with non-negative (but not necessarily equal) weights, one obtains the more
flexible notion of a \emph{generalized $L_p$-discrepancy}. This extended
framework is particularly useful in high dimensions: it allows one to
incorporate non-uniform sampling densities and to interpret the analysis in
terms of importance sampling for the Sobolev space~$F_{d,q}$.

\medskip

It is known that the classical $L_p$-discrepancy suffers from the curse of
dimensionality for all $p\in(1,\infty)$ when restricted to non-negative
weights \cite{NP23,NP24}. More precisely, the number of required sample points
grows exponentially in~$d$ when aiming at a fixed relative error
reduction. Nevertheless, the best available upper bounds
are still far from matching
these lower bounds. The gap is especially large for small values of $p$, and
the case $p=1$ remains the least understood. Existing explicit bounds for the
average $L_p$-discrepancy of uniformly distributed random points are known
only for even integers $p\ge2$, and asymptotic results for general
$p\in[1,\infty)$ are due to Steinerberger \cite{St2010} and Hinrichs--Weyhausen
\cite{HW12}. Much less is known about how one can improve these bounds by
choosing the sampling distribution in a non-uniform way.

\medskip

The present paper investigates the effect of non-uniform random sampling.
Instead of choosing points independently and uniformly in $[0,1]^d$, we draw
them independently according to a product density $\varrho_d=\varrho^{\otimes
d}$ and evaluate the integrand with weights proportional to $1/\varrho_d$.
This leads to the generalized discrepancy $L_{p,N}(\mathcal{P},\mathcal{A})$,
whose expectation with respect to the density~$\varrho_d$ we analyze in
detail. The probabilistic method then implies the existence of deterministic
point sets with discrepancy no larger than this expectation.

\medskip

Our first goal is to clarify to what extent a suitable change of measure can
improve the probabilistic upper bounds in high dimensions. The answer turns
out to depend strongly on~$p$. For $p=2$ we completely solve the underlying
variational problem and recover the optimal density already found implicitly
in earlier work of Plaskota, Wasilkowski, and Zhao \cite{Petal}. This yields
explicit optimal constants and improves the classical bound by a factor of
$(4/3)^{d/2}$ instead of $(3/2)^{d/2}$. For general $p\in[1,\infty)$ we show
that the optimal one-dimensional density is uniquely determined by a
nonlinear equation and leads to a base of the form
\[
  \alpha_p = \Bigl(\tfrac{p+2}{p+1}\Bigr)^{1/2}
\]
in the exponential factor of the $d$-dependence. This improves the
corresponding base $((2p+2)/(p+2))^{1/p}$ from the classical uniform-sampling bound.
The effect is most pronounced for small~$p$ and diminishes as $p\to\infty$.

\medskip

A second goal is to clarify how random information compares with optimal
deterministic constructions. Our upper bounds show that random points drawn
from optimally chosen product measures can significantly outperform all
currently known deterministic point sets with respect to generalized
discrepancy. At the same time, even for the optimal density, the curse of
dimensionality persists for all $p\ge1$. Indeed, the exponential growth rate
in~$d$ becomes \emph{more} severe as $p\to1$. This suggests that the curse
should also hold for the classical $L_1$-discrepancy with deterministic
points---a problem that is still open.

\medskip

The paper is organized as follows. In Section~\ref{sec:p2} we study the case
$p=2$ in detail, solve the associated variational problem explicitly, and
derive the optimal density. Section~\ref{sec:generalp} extends the analysis to
general $p\in[1,\infty)$, leading to an implicit characterization of the
optimal density and sharp asymptotic bounds for the average discrepancy. In
Section~\ref{stability} we discuss the stability of the resulting
quadrature formulas in the norm of~$F_{d,q}$ and present open problems.
But first we give some further background.


\section{Background}\label{sec:back} 

We study numerical integration for the function space that is the canonical 
space underlying the Koksma-Hlawka inequality as presented in \cite[Chapter~9, Section~9.8]{NW10}. 
The goal is to investigate how probabilistic arguments can improve upper 
bounds for generalized discrepancy measures and the corresponding integration problem.

\paragraph{The function space setting.}
Throughout this paper let $p,q \ge 1$ be H\"older conjugates, i.e.,  $1/p + 1/q = 1$. 
For $d = 1$, let $W_q^1([0,1])$ denote the space of absolutely continuous 
functions whose first derivatives belong to $L_q([0,1])$. 
For $d>1$ we consider the $d$-fold tensor product space
\[
W_q^{\boldsymbol{1}} := W_q^{(1,1,\ldots,1)}([0,1]^d),
\]
which is the Sobolev space of functions on $[0,1]^d$ that 
are once differentiable in each variable and whose mixed derivative 
$\partial^d f / \partial \bsx$ has finite $L_q$-norm, where $\partial \bsx 
= \partial x_1 \partial x_2 \ldots \partial x_d$. 

Now consider the subspace of functions satisfying the boundary conditions\footnote{%
In analysis and stochastics the boundary condition $f(0)=0$ is more common. 
Of course it does not really matter, since one can always change a variable 
$t\in(0,1)$ to $1-t$ to switch between the two cases; see also \cite[p.~24]{NW10}.}  
$f(\bsx)=0$ if at least one component of $\bsx=(x_1,\ldots,x_d)$ equals $1$, 
and equip this subspace with the norm
\[
\|f\|_{d,q} := \Biggl(\int_{[0,1]^d} 
  \Bigl| \frac{\partial^d}{\partial \bsx}f(\bsx)\Bigr|^q \rd \bsx \Biggr)^{1/q}
  \quad \text{for } q\in[1,\infty),
\]
and 
\[
\|f\|_{d,\infty} := 
  \sup_{\bsx \in [0,1]^d}
  \Bigl|\frac{\partial^d}{\partial \bsx}f(\bsx)\Bigr|
  \quad \text{for } q=\infty.
\]
That is,
\[
F_{d,q} := \Bigl\{
  f \in W_q^{\boldsymbol{1}} :
  f(\bsx)=0 \text{ if } x_j=1 \text{ for some } j\in[d],\ 
  \|f\|_{d,q}<\infty
  \Bigr\},
\]
where $[d]:=\{1,2,\ldots,d\}$.

\paragraph{Integration and linear algorithms.}
Now consider multivariate integration
\begin{equation}\label{int:probl}
I_d(f) := \int_{[0,1]^d} f(\bsx)\,\rd \bsx
\quad\text{for } f\in F_{d,q}.
\end{equation}
We approximate $I_d(f)$ by linear algorithms of the form
\begin{equation}\label{def:linAlg}
A_{d,N}(f) = \sum_{k=1}^N a_k f(\bst_k),
\end{equation}
where $\bst_1,\ldots,\bst_N \in [0,1)^d$ are sample points and 
$a_1,\ldots,a_N$ are real integration weights. 
If $a_1=\dots=a_N=1/N$, then \eqref{def:linAlg} is a quasi-Monte~Carlo (QMC) 
algorithm, denoted by $A_{d,N}^{\ast}$. 

In this paper we derive new upper bounds for \emph{non-negative} quadrature formulas, i.e.\ those with 
$a_k\ge0$ for all $k\in\{1,\dots,N\}$. 
To emphasize that only non-negative weights are used we write $A_{d,N}^+$. 
Such non-negative integration weights are often favored in practice 
because of their monotonicity and numerical stability (see Section~\ref{stability}). 

For lower bounds, the restriction to positive quadrature formulas makes the results weaker, 
but here we focus entirely on \emph{upper} bounds.

\paragraph{Worst-case error and generalized discrepancy.}
Define the worst-case error of an algorithm \eqref{def:linAlg} by
\begin{equation}\label{eq:wce}
e(F_{d,q},A_{d,N})
   = \sup_{\substack{f \in F_{d,q}\\ \|f\|_{d,q}\le1}}
     \bigl| I_d(f)-A_{d,N}(f)\bigr|.
\end{equation}
For a QMC algorithm $A_{d,N}^{\ast}$ it is well-known (see, e.g., \cite[Section~9.8.1]{NW10}) that
\[
e(F_{d,q},A_{d,N}^{\ast}) = L_{p,N}^{\ast}(\cP),
\]
where $L_{p,N}^{\ast}(\cP)$ is the classical $L_p$-discrepancy of the 
point set $\cP = \{\bst_1,\ldots,\bst_N\}$.

For general linear algorithms \eqref{def:linAlg} the worst-case error equals the 
\emph{generalized $L_p$-discrepancy}
\begin{equation}\label{dual}
e(F_{d,q},A_{d,N}) = L_{p,N}(\cP,\cA),
\end{equation}
where $\cA=\{a_1,\ldots,a_N\}$ are the integration weights corresponding to $\cP$. 
This equality follows from the Koksma-Hlawka inequality; see \cite[Section~9.8.1]{NW10}. 

For points $\cP=\{\bst_k\}$ and weights $\cA=\{a_k\}$, the discrepancy function is
\[
\Delta_{\cP,\cA}(\bsx)
   = \sum_{k=1}^N a_k\,\mathbf{1}_{[\boldsymbol{0},\bsx)}(\bst_k) 
     - x_1x_2\cdots x_d,
\]
for $\bsx=(x_1,\ldots,x_d)\in[0,1]^d$, and the generalized $L_p$-discrepancy is
\[
L_{p,N}(\cP,\cA)
  = \Biggl(\int_{[0,1]^d}
        |\Delta_{\cP,\cA}(\bsx)|^p \rd\bsx
    \Biggr)^{1/p}\qquad \mbox{for $p\in[1,\infty)$,}
\]
with the usual modification for $p=\infty$. If $a_1=\dots=a_N=1/N$, we recover the classical definition $L_{p,N}^{\ast}(\cP)$.

We define the $N$th minimal worst-case error
\[
e_q(N,d)
  := \min_{A_{d,N}} e(F_{d,q},A_{d,N}),
\]
where the minimum ranges over all algorithms of the form~\eqref{def:linAlg}. 
The initial error (i.e., $N=0$) equals
\[
e_q(0,d)
   = \sup_{\|f\|_{d,q}\le1} |I_d(f)|
   = \begin{cases}
      (p+1)^{-d/p} & \mbox{if $q\in(1,\infty]$,}\\[0.3em]
      1 & \mbox{if $q=1$.}
     \end{cases}
\]

Using the duality \eqref{dual}, one obtains
\begin{equation}\label{dual:erdisc}
e_q(N,d)
   = {\rm disc}_p(N,d)
   := \min_{\cP,\cA} L_{p,N}(\cP,\cA),
\end{equation}
and we refer to ${\rm disc}_p(N,d)$ as the \emph{$N$th minimal $L_p$-discrepancy}. 

\paragraph{Information complexity.}
We define the information complexity as the minimal number of function 
evaluations needed to reduce the initial error by a factor $\varepsilon$, where $\varepsilon >0$:  
\[
N_q^{\rm int}(\varepsilon,d)
   := \min\{ N\in\N : e_q(N,d) \le \varepsilon\,e_q(0,d)\}.
\]
By duality,
\[
N_q^{\rm int}(\varepsilon,d)
  = N_p^{\rm disc}(\varepsilon,d),
\quad
N_p^{\rm disc}(\varepsilon,d)
  := \min\{N: {\rm disc}_p(N,d)\le \varepsilon\,{\rm disc}_p(0,d)\}.
\]

In the QMC case ($a_k=1/N$), we add a superscript ${\ast}$, 
writing $e_q^{\ast}(N,d)$, ${\rm disc}_p^{\ast}$, $N_q^{{\rm int},{\ast}}$, etc.  
When we restrict to non-negative weights ($a_k\ge0$), we write the superscript $+$.  
Obviously,
\[
e_q(N,d)\le e_q^+(N,d)\le e_q^{\ast}(N,d),
\qquad
N_q^{\rm int}(\varepsilon,d)
  \le N_q^{{\rm int},+}(\varepsilon,d)
  \le N_q^{{\rm int},{\ast}}(\varepsilon,d),
\]
and analogous relations hold for the corresponding discrepancies.\\

An important question now is, how quickly the information complexity $N_q^{{\rm int}}(\varepsilon,d)$ increases, 
when $d \rightarrow \infty$ and/or $\varepsilon \rightarrow 0$ (i.e., when $d+\varepsilon^{-1} \rightarrow \infty$). 
Such questions are typically studied in the field ``Information Based Complexity'' (see \cite{NW08,NW10,NW12}). 
If $N_q^{{\rm int}}(\varepsilon,d)$ grows at least exponentially fast in $d$ for a fixed $\varepsilon >0$, 
more precisely, if there exists a real $C>1$ such that $N_q^{{\rm int}}(\varepsilon,d) \ge C^d$ 
for infinitely many $d \in \N$, then the integration  problem is said to suffer from the curse of dimensionality.

If, on the other hand, $N_q^{{\rm int}}(\varepsilon,d)$ grows at most sub-exponentially fast 
in $d$ and $\varepsilon^{-1}$ for $d+\varepsilon^{-1} \rightarrow \infty$, then the integration 
problem is said to be tractable and there are various notions of tractability in order 
to classify the grow rate of $N_q^{{\rm int}}(\varepsilon,d)$ more accurately\footnote{For example, the integration problem  is said to be polynomially tractable, if there exist numbers $C,\sigma>0$ and $\tau \ge 0$ such that $N_q^{{\rm int}}(\varepsilon,d)\le C d^{\tau} \varepsilon^{- \sigma}$ for all $\varepsilon \in (0,1)$ and $d \in \N$. If this holds even for $\tau=0$, the one speaks about strong polynomial tractability.}.

\medskip

In our recent papers \cite{NP23,NP24,NP24a} we proved \emph{lower bounds}, showing that the 
curse holds (in terms of discrepancy) for all \(p\in(1,\infty)\) for non-negative quadrature formulas, 
whereas it disappears for \(p=\infty\) (\cite{HNWW01}); the case \(p=1\) remains open. 
Now we derive \emph{upper bounds} using the probabilistic method, i.e.\ we analyze the 
behavior of random points and compute or estimate expected discrepancies.

\paragraph{The probabilistic method in discrepancy theory.}
The probabilistic method was first used in a deep result of Hlawka~\cite{Hl47} (1947) 
to prove the existence of a lattice sphere packing with large density, 
building on Siegel’s mean value theorem~\cite{Sie45}. 
Almost simultaneously, Erd\H{o}s~\cite{E47} employed the method for 
existence theorems in combinatorics and graph theory.

Regarding the $L_{\infty}^{\ast}$- (star) discrepancy, Heinrich et~al.~\cite{HNWW01} proved
\[
N_{\infty}^{{\rm disc},\ast}(d,\varepsilon) \le C\, d\, \varepsilon^{-2},
\]
where $C>0$ is an absolute constant.  
Equivalently, for every $d$ and $N$ there exists an $N$-element point set $\cP\subseteq [0,1)^d$ such that
\[
L_{\infty,N}^{\ast}(\cP) \le \sqrt{C}\,\sqrt{\tfrac{d}{N}}.
\]
For this and related results on average $L_p^{\ast}$-discrepancies (for even~$p$), 
they applied the probabilistic method to i.i.d.\ uniformly distributed points in $[0,1]^d$. 
Let $\cP_N$ denote an i.i.d.\ sample of $N$ uniform random points in $[0,1]^d$. 
Then, with high probability,
\[
L_{\infty,N}^{\ast}(\cP_N)\le C\sqrt{\tfrac{d}{N}},
\]
see Aistleitner and Hofer~\cite{AH14}. Doerr~\cite{Doe14} showed that this order cannot be improved 
substantially: there exist absolute constants $C_1,C_2>0$ with
\[
C_1\sqrt{\tfrac{d}{N}} \le \E[L_{\infty,N}^{\ast}(\cP_N)] \le C_2\sqrt{\tfrac{d}{N}}.
\]

In weighted function spaces, where the relevance of 
coordinates decays, QMC methods can achieve dimension-independent bounds.
If, in the sense of Sloan and Wo\'{z}niakowski~\cite{SW98}, coordinate weights $\boldsymbol{\gamma}=(\gamma_j)$ control variable importance, then randomized lattice rules 
and digital nets satisfy, for every $\delta\in(0,1)$, the existence of a positive $C(\boldsymbol{\gamma},\delta)$ such that
\[
\E[L_{\infty,N,\boldsymbol{\gamma}}^{\ast}(\cP_N)] \le C(\boldsymbol{\gamma},\delta)\,N^{-1+\delta},
\]
leading to strong polynomial tractability (see, e.g., \cite{DickKuoSloan2013,DKP,DickPillichshammer2010,NP25c,SW98}). 

Randomized shifts and scrambles are analyzed probabilistically, 
and expectation bounds again imply the existence of good deterministic point sets.  
These results ensure polynomial or strong polynomial tractability (see \cite{NW08,NW10}).  
For some, though not all, of these upper bounds there exist constructive versions.

\medskip

In this paper we concentrate on the integration problem related to the classical $L_p^{\ast}$-discrepancy for
finite $p$, i.e., $p \in [1,\infty)$. 
We use random i.i.d.\ points \(\{\bst_1,\ldots,\bst_N\}\) in $[0,1]^d$. 
However, uniform sampling is not appropriate for the underlying Sobolev space \(F_{d,q}\), 
because functions in \(F_{d,q}\) satisfy \(f(\bsx)=0\) whenever any coordinate \(x_j=1\); 
hence their values are more significant for small coordinates (where \(f\) can be large) 
than for large ones (where \(f\) must be small). Consequently, regions near the origin contribute most to the integration error.

This observation motivates a change of measure, analogous to importance sampling in Monte Carlo methods~\cite{MNR2012}.
Even though we are ultimately interested in deterministic upper bounds for quadrature 
formulas (or, equivalently, for the \(L_p^{\ast}\)-discrepancy), we employ a different 
averaging strategy in the probabilistic method. There is, however, one caveat for the 
discrepancy setting: we must replace the equal QMC weights \(1/N\) by new non-negative 
weights \(a_k = 1/(N\varrho_d(\bst_k))\), where \(\varrho_d\) is a suitably optimized 
density. Hence our new upper bounds apply to the \emph{generalized} \(L_p\)-discrepancy. 

In more detail, we use random i.i.d.\ points \(\cP_N=\{\bst_1,\ldots,\bst_N\}\) in $[0,1]^d$ 
drawn according to a density function~\(\varrho_d\) on \([0,1]^d\), and consider algorithms of the form
\begin{equation}\label{alg:rho}
A_{d,N}(f)
   = \frac{1}{N}\sum_{k=1}^N\frac{1}{\varrho_d(\bst_k)}f(\bst_k).
\end{equation}
We write $\cA_{\varrho_d}$ for the set of these weights. We then focus on the $p$-average
\[
\av_p(N,d,\varrho_d):=\E\bigl[(L_{p,N}(\cP_N,\cA_{\varrho_d}))^p\bigr]^{1/p},
\]
and its normalized version
\[
\nav_p(N,d,\varrho_d)
   := (p+1)^{d/p}\,\E\bigl[(L_{p,N}(\cP_N,\cA_{\varrho_d}))^p\bigr]^{1/p},
\]
where in both cases the expectation is with respect to the density $\varrho_d$. 
The classical QMC-case corresponds to $\varrho_d= 1$. We then write simply $\av_p^{\ast}(N,d)$ and $\nav_p^{\ast}(N,d)$, respectively.

Note that an upper bound of the form
\[
\nav_p(N,d,\varrho_d)\le C_p\,\alpha_p^d\,N^{-1/2},
\]
for some non-negative $C_p,\alpha_p$, implies
\begin{equation}\label{bd:comp:av}
N_p^{{\rm disc}}(\varepsilon,d)
  \le  C_p^2\,\alpha_p^{2d}\,\varepsilon^{-2} .
\end{equation}

\paragraph{Main questions.}
We aim to contribute to two fundamental questions:
\begin{enumerate}
\item How can we establish sharper \emph{upper bounds} for the integration problem related to the classical \(L_p^{\ast}\)-discrepancy?
\item How good is random information compared with deterministic constructions?
For a moderate number of points $\bst_1,\ldots,\bst_N\in[0,1]^d$,
it turns out that random information (random choice of the points)
outperform all presently known deterministic constructions.
Existing results usually consider points that are i.i.d.\ and uniformly distributed. 
We demonstrate that substantially better results arise when the random points follow 
product measures whose densities are concentrated toward small values of \(x\).
See \cite{HKNPU} and \cite{SoUl} for recent surveys on i.i.d.\ sampling. 
\end{enumerate}

\paragraph{Asymptotic analysis.} Throughout, for functions $f,g: \N \rightarrow \R$ we 
write $f(N)\sim g(N)$ for $N \rightarrow \infty$, if $\lim_{N \rightarrow \infty} f(N)/g(N)=1$ 
(the usual asymptotic equivalence in the sense of de Bruijn~\cite{dB}) 
and we write $f(N) \lesssim g(N)$ for $N \rightarrow \infty$, if  $\lim_{N \rightarrow \infty} f(N)/g(N) \le 1$. 

\section{Change of measure for $p=2$}\label{sec:p2}

Consider the integration problem \eqref{int:probl} for functions from the Sobolev-Hilbert space \(F_{d,2}\). The reproducing kernel of this space is the tensor product of \(K_1(x,y)=1-\max(x,y)\), and the representer of the integral is 
\(h_d(\bsx)=\prod_{j=1}^d \tfrac12(1-x_j^2)\) 
with \(\|h_d\|_{d,2}=3^{-d/2}\).

We use random i.i.d.\ points \(\bst_1,\ldots,\bst_N\in[0,1]^d\)
drawn according to a density function~\(\varrho_d\) on \([0,1]^d\),
and consider algorithms of the form
\[
A_{d,N}(f)
   = \frac{1}{N}\sum_{k=1}^N\frac{1}{\varrho_d(\bst_k)}f(\bst_k).
\]
Using \cite[Eq.~(9.46)]{NW10} it is straightforward to compute the squared worst-case error:
\[
e(F_{d,2},A_{d,N})^2
   = \frac{1}{3^d}
     -\frac{2}{N}\sum_{k=1}^Nh_d(\bst_k)\frac{1}{\varrho_d(\bst_k)}
     +\frac{1}{N^2}\sum_{k,\ell=1}^N
        K_d(\bst_k,\bst_\ell)
        \frac{1}{\varrho_d(\bst_k)\varrho_d(\bst_\ell)}.
\]
We assume that
\begin{equation}\label{C(K)}
C(K_d,\varrho_d)
   :=\int_{[0,1]^d}K_d(\bst,\bst)\frac{1}{\varrho_d(\bst)}\,\rd\bst
   <\infty.
\end{equation}

Taking the expectation over the random points yields
\[
\E[e(F_{d,2},A_{d,N})^2]
   = \frac{C(K_d,\varrho_d)-3^{-d}}{N}.
\]
Hence, by the mean-value argument,
for certain deterministic points \(\bst_1,\ldots,\bst_N\) we have
\begin{equation}\label{upper}
e(F_{d,2},A_{d,N})^2
   \le \frac{C(K_d,\varrho_d)-3^{-d}}{N}.
\end{equation}
If we take uniformly distributed points, i.e., $\varrho_d=\lambda_d$, 
then we recover the well-known 
equation
\begin{equation}\label{eq3}
\nav_2^{\ast}(N,d) = 3^{d/2}\sqrt{\tfrac{2^{-d}-3^{-d}}{N}} \, \lesssim \,  
\Bigl(\tfrac{3}{2}\Bigr)^{d/2}N^{-1/2},
\end{equation}
for all \(N\) and \(d\).

Comparing \eqref{C(K)} and~\eqref{upper}, 
it is evident that we want to minimize \(C(K_d,\varrho_d)\)
over all densities~\(\varrho_d\).
This optimization problem has a unique solution, 
which follows from convexity via the Euler-Lagrange equations.
For \(d=1\) one needs to minimize
\begin{equation*}
C(K_1,\varrho)
   = \int_0^1(1-t)\frac{1}{\varrho(t)}\,\rd t,
\end{equation*}
and the solution is (see also Theorem~\ref{thm:opt:meas})
\begin{equation}\label{optdenp2d1}
\varrho^{\ast}(t) = \tfrac{3}{2}\sqrt{1-t}.
\end{equation}
We then obtain the minimal value \(C(K_1,\varrho^{\ast})= \tfrac{4}{9}\). The density \eqref{optdenp2d1} was already used in \cite{NW10,Petal}, here we clearly see that it is optimal.

For \(d\in \mathbb{N}\) the optimal density $\varrho_d^{\ast}$ is the $d$-fold tensor product $\varrho^{\ast,\otimes d}:=\bigotimes_{j=1}^d \varrho^{\ast}$, leading to the following result, which improves~\eqref{eq3}:

\begin{thm}
For the optimal density $\varrho_d^{\ast}$ we have  
\begin{equation}\label{eq4}
\nav_2(N,d,\varrho_d^{\ast}) =  3^{d/2}\sqrt{\tfrac{(4/9)^d-3^{-d}}{N}}
\, \lesssim \, \Bigl(\tfrac{4}{3}\Bigr)^{d/2}N^{-1/2}.
\end{equation}
We have $\varrho_d^{\ast}=\varrho^{\ast,\otimes d}$, where $\varrho^{\ast}$ is given in \eqref{optdenp2d1}.
\end{thm}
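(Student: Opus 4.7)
The plan is to derive the exact identity first and then extract the asymptotic. Using the formula from the excerpt, $\E[e(F_{d,2},A_{d,N})^2] = (C(K_d,\varrho_d) - 3^{-d})/N$, together with $K_d(\bst,\bst) = \prod_{j=1}^d (1-t_j)$ and the product structure of $\varrho^{\ast,\otimes d}$, a one-variable computation gives
$$C(K_d,\varrho^{\ast,\otimes d}) = \left(\int_0^1 \frac{1-t}{\tfrac{3}{2}\sqrt{1-t}}\,\rd t\right)^d = \left(\tfrac{2}{3}\cdot\tfrac{2}{3}\right)^d = (4/9)^d.$$
Multiplying $\E[e^2]$ by $3^d$ and taking square roots then yields the equality in \eqref{eq4}. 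The asymptotic follows from $(4/9)^d - 3^{-d} = (4/9)^d(1 - (3/4)^d)$, so that $\nav_2(N,d,\varrho_d^{\ast}) = (4/3)^{d/2}\sqrt{(1 - (3/4)^d)/N} \lesssim (4/3)^{d/2} N^{-1/2}$ since $(3/4)^d \to 0$.

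The real content is the optimality claim, namely that $\varrho^{\ast,\otimes d}$ minimizes $C(K_d,\cdot)$ over \emph{all} densities on $[0,1]^d$, not just among product densities. Instead of the Euler--Lagrange route hinted at in the text, I would apply a sharp Cauchy--Schwarz inequality to the splitting $\sqrt{K_d(\bst,\bst)} = \bigl(K_d(\bst,\bst)/\varrho_d(\bst)\bigr)^{1/2} \cdot \varrho_d(\bst)^{1/2}$, which yields
$$\left(\int_{[0,1]^d} \sqrt{K_d(\bst,\bst)}\,\rd\bst\right)^2 \le C(K_d,\varrho_d) \cdot \int_{[0,1]^d} \varrho_d(\bst)\,\rd\bst = C(K_d,\varrho_d).$$
The left-hand side factorizes as $\bigl(\int_0^1 \sqrt{1-t}\,\rd t\bigr)^{2d} = (2/3)^{2d} = (4/9)^d$, so $C(K_d,\varrho_d) \ge (4/9)^d$ universally. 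Equality in Cauchy--Schwarz holds if and only if $\varrho_d(\bst) \propto \sqrt{K_d(\bst,\bst)}$, and normalizing to unit mass gives exactly the product density $\varrho^{\ast,\otimes d}$ with $\varrho^{\ast}(t) = \tfrac{3}{2}\sqrt{1-t}$. Thus the tensor-product form of the optimum is \emph{proven} rather than assumed, and combining with the computation above shows $\varrho_d^{\ast} = \varrho^{\ast,\otimes d}$.

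The only technical care-point I anticipate is ensuring that the Cauchy--Schwarz equality case produces an admissible density satisfying the integrability condition \eqref{C(K)}. This is immediate since $\sqrt{K_d(\bst,\bst)}$ is bounded on $[0,1]^d$ and strictly positive on its interior, so the normalized density $\varrho^{\ast,\otimes d}$ is bounded with $C(K_d,\varrho^{\ast,\otimes d}) = (4/9)^d < \infty$. All remaining steps are routine integrations, and I expect no serious obstacle---the Cauchy--Schwarz application is the entire insight, and it simultaneously delivers both the sharp minimum value $(4/9)^d$ and the unique minimizer with its tensor-product structure.
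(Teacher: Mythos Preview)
Your proposal is correct and takes a genuinely different route from the paper. The paper argues via the Euler--Lagrange equations and convexity: it first solves the one-dimensional variational problem $\min_\varrho\int_0^1(1-t)\varrho(t)^{-1}\rd t$ to obtain $\varrho^{\ast}(t)=\tfrac32\sqrt{1-t}$ and $C(K_1,\varrho^{\ast})=4/9$, and then asserts (without a detailed argument) that in dimension $d$ the optimal density is the $d$-fold tensor product. Your Cauchy--Schwarz argument is more elementary and strictly more informative: the inequality $\bigl(\int\sqrt{K_d}\bigr)^2\le C(K_d,\varrho_d)$ immediately gives the sharp lower bound $(4/9)^d$ over \emph{all} densities on $[0,1]^d$, and the equality condition $\varrho_d\propto\sqrt{K_d(\cdot,\cdot)}$ simultaneously identifies the unique minimizer and \emph{proves} its tensor-product form, rather than assuming it. The only imprecision is in your justification of the $\lesssim$ step: you write ``since $(3/4)^d\to0$'', but the paper's $\lesssim$ is an $N\to\infty$ statement with $d$ fixed; what you actually need (and have) is simply $1-(3/4)^d\le1$ for every $d\ge1$, so the ratio $\nav_2/((4/3)^{d/2}N^{-1/2})=\sqrt{1-(3/4)^d}$ is a constant in $N$ that is at most $1$.
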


The factor \(3^{d/2}\) in \eqref{eq4} stems from normalization. 

\begin{remark}\rm
It is known that the weights \(1/(N\varrho_d(\bst_k))\) are not optimal:
for small \(N\) one should use smaller weights.
A simple modification is to introduce a constant \(c>0\) and define
\[
A_{d,N}^{(c)}(f)
   := \frac{c}{N}\sum_{k=1}^N\frac{1}{\varrho_d(\bst_k)}f(\bst_k).
\]
Computing
\(\E[e(F_{d,2},A_{d,N}^{(c)})^2]\) and optimizing over \(c\) gives
\[
c_{\ast}
   = \frac{N}{N-1+3^{d}C(K_d,\varrho_d)}.
\]
With this choice we obtain
\[
\E[e(F_{d,2},A_{d,N}^{(c_{\ast})})^2]
   =  \frac{N}{N-1+3^{d}C(K_d,\varrho_d)}\, \E[e(F_{d,2},A_{d,N})^2].
\]
Hence, for \(N\le 3^{d}C(K_d,\varrho_d)\) a significant improvement is achieved, 
whereas for very large \(N\) the benefit becomes negligible.
\end{remark}

\begin{remark}\rm
It was shown by Hinrichs~\cite{Hi2010} that for \(p=2\) the curse of dimensionality 
(for the normalized error) can be avoided in the randomized setting using 
importance sampling with a suitable density, generally unknown in closed form.
Here one can proceed similarly to the survey~\cite[pp.~159-160]{Hi}:
sampling with the representer \(h_d(\bsx)=\prod_{j=1}^d \tfrac12(1-x_j^2)\). 
Then the Monte Carlo error is bounded by the norm of integration 
(the initial error) times \(N^{-1/2}\);
thus there is \emph{no curse for the normalized error}.
The proof closely follows \cite[p.~160]{Hi}.
In fact, the normalized density proportional to \(h_d\) is optimal 
in the randomized setting for importance sampling.

Interestingly, the two optimal densities---for the deterministic worst-case 
setting and for the randomized setting---are similar but not identical.
The optimal density \eqref{optdenp2d1} minimizes the worst-case error 
for deterministic algorithms but is not optimal for randomized importance sampling 
and cannot remove the curse entirely.
\end{remark}

\section{Change of the measure for general $p$}
\label{sec:generalp} 

First we review known explicit upper bounds and their 
limitations. As a tool to prove upper bounds for the star-discrepancy (\(p=\infty\)), 
the authors of~\cite{HNWW01} computed the average \(L_p^{\ast}\)-discrepancy 
for {\it even} integers~\(p\). The exact expressions are lengthy, but after normalization 
(multiplication by \((p+1)^{d/p}\)) they lead to the bound
\begin{equation}\label{eq10}
\nav_p^{\ast}(N,d)
   \le 3^{2/3}\,2^{5/2}\,p
        \Bigl(\tfrac{2p+2}{p+2}\Bigr)^{d/p}
        N^{-1/2}.
\end{equation}
Gnewuch~\cite{Gne05} proved, that this estimate can be improved by symmetrization, 
yielding\footnote{The version stated in~\cite{HW12} and the Oberwolfach report~\cite{HN04} contains a minor error.}
\begin{equation}\label{eq11}
\nav_p^{\ast}(N,d) \le 2^{1/2}p^{1/2} \Bigl(\tfrac{2p+2}{p+2}\Bigr)^{d/p} N^{-1/2}.
\end{equation}
The upper bound~\eqref{eq11} is quite sharp, since the factor \(2^{1/2}p^{1/2}\)
exceeds the asymptotically correct constant that will be given in~\eqref{uniform} by at most a factor of \(\sqrt{2\,{\rm e}}\approx 2.33\).

We emphasize that both bounds \eqref{eq10} and \eqref{eq11} are proved only for even integers~\(p\); 
they provide information on the curse of dimensionality only for \(p\ge2\). 
Much less is known for small \(p\), and no comparable results exist for \(p<2\). 

The main part in the bounds~\eqref{eq10} and \eqref{eq11}
is the same factor \(\bigl(\tfrac{2p+2}{p+2}\bigr)^{d/p}\),
which in view of \eqref{bd:comp:av} implies that the number of required points grows as
\begin{equation}\label{order}
N \asymp \Bigl(\tfrac{2p+2}{p+2}\Bigr)^{2d/p}.
\end{equation}
Observe that the cost \(N\) increases exponentially in \(d\),
but the bound becomes more favorable for large \(p\):
for \(p=2,\,10,\text{ and }100\) we obtain
\((2p+2)/(p+2))^{2/p} = 1.5,\ 1.13,\ \text{and }1.014\), respectively.

One of the main goals of this paper is to improve the base in~\eqref{order} 
by means of a  change of measure
and to understand the behavior also for \(p<2\). 
For general finite \(p\), we obtain asymptotic relations that hold for large \(N\) only. 
The asymptotic result due to Steinerberger~\cite{St2010} and 
Hinrichs and Weyhausen~\cite{HW12}, valid for uniformly distributed points, is
\begin{equation}\label{uniform}
\nav_p^{\ast}(N,d)
\, \lesssim \, 
   \tfrac{\sqrt{2}}{\pi^{1/(2p)}}
   \Gamma\!\left(\tfrac{p+1}{2}\right)^{1/p}
   \Bigl(\tfrac{2p+2}{p+2}\Bigr)^{d/p}
   N^{-1/2}
   \quad \text{as } N\to\infty.
\end{equation}
We improve this by optimizing the change of measure, obtaining
\begin{equation}\label{new-density}
\nav_p(N,d,\varrho_d^{\ast})
   \lesssim 
   \tfrac{\sqrt{2}}{\pi^{1/(2p)}}
   \Gamma\!\left(\tfrac{p+1}{2}\right)^{1/p}
   \Bigl(\tfrac{p+2}{p+1}\Bigr)^{d/2}
   N^{-1/2}
   \quad \text{as } N\to\infty,
\end{equation}
which holds for the optimal density described in Theorem~\ref{thm:opt:meas}. 
We would like to point out that in all these relationships 
$\lesssim$ could be replaced by $\sim$ if $d$ together with $N$ tends towards infinity.
Using Stirling’s formula, it follows that
\[
\Gamma\!\left(\tfrac{p+1}{2}\right)^{1/p}
   \sim \sqrt{\tfrac{p}{2\, {\rm e}}}
   \qquad \text{as } p\to\infty.
\]

In the most interesting case \(p=1\), these results read
\begin{equation}\label{eq7}
\nav_1^{\ast}(N,d)
  \, \lesssim \,  \sqrt{\tfrac{2}{\pi}}
         \Bigl(\tfrac{4}{3}\Bigr)^d N^{-1/2}
   \quad\text{as } N\to\infty,
\end{equation}
which follows from \cite[Example~1]{HW12}. Using the optimal one-dimensional density
\begin{equation}\label{optimal1}
\varrho^{\ast}(t) = 1 + 2\cos\!\Big(\tfrac{1}{3}\arccos(2t-1) + \tfrac{4\pi}{3}\Big)\quad \mbox{for $t \in [0,1]$}
\end{equation}
from Theorem~\ref{thm:opt:meas}, \eqref{eq7} is  improved to
\begin{equation}\label{eq8}
\nav_1(N,d,\varrho_d^{\ast})
 \,  \lesssim \, \sqrt{\tfrac{2}{\pi}}\,1.225^{d}N^{-1/2}
   \quad\text{as }N\to\infty,
\end{equation}
where the value \( \sqrt{3/2} \approx 1.225 \) arises from the density~\eqref{optimal1}.
Even with this optimized density the curse of dimensionality remains rather severe 
for random points and \(p=1\); it appears more pronounced than for larger~\(p\).
Hence we conjecture that the curse also persists for \(p=1\) for optimally chosen deterministic points.

\medskip

To visualize the improvement achieved by the change of measure,  
we display in Figure~\ref{fig:alpha} the two functions $\alpha^2_{\text{old}}(p)$ and $\alpha^2_{\text{new}}(p)$, where  
\begin{equation}\label{def:alpha:old:new}
\alpha_{\text{old}}(p)
   := \Bigl(\tfrac{2p+2}{p+2}\Bigr)^{1/p},
   \qquad
\alpha_{\text{new}}(p)
   := \Bigl(\tfrac{p+2}{p+1}\Bigr)^{1/2}. 
\end{equation}
These functions appear in  \eqref{uniform} and~\eqref{new-density}, respectively, 
if we ask how $N$ depends on $\varepsilon$ for a fixed $d$. They correspond directly to 
the quantity $\alpha_p$ in \eqref{bd:comp:av}. The plot clearly shows that $\alpha^2_{\text{old}}(p)$ is strictly 
larger than $\alpha^2_{\text{new}}(p)$ for all $p$ in this range, and that 
both decrease monotonically toward~1 as \(p\to\infty\).
\begin{figure}
\centering
\includegraphics[width=0.6 \textwidth]{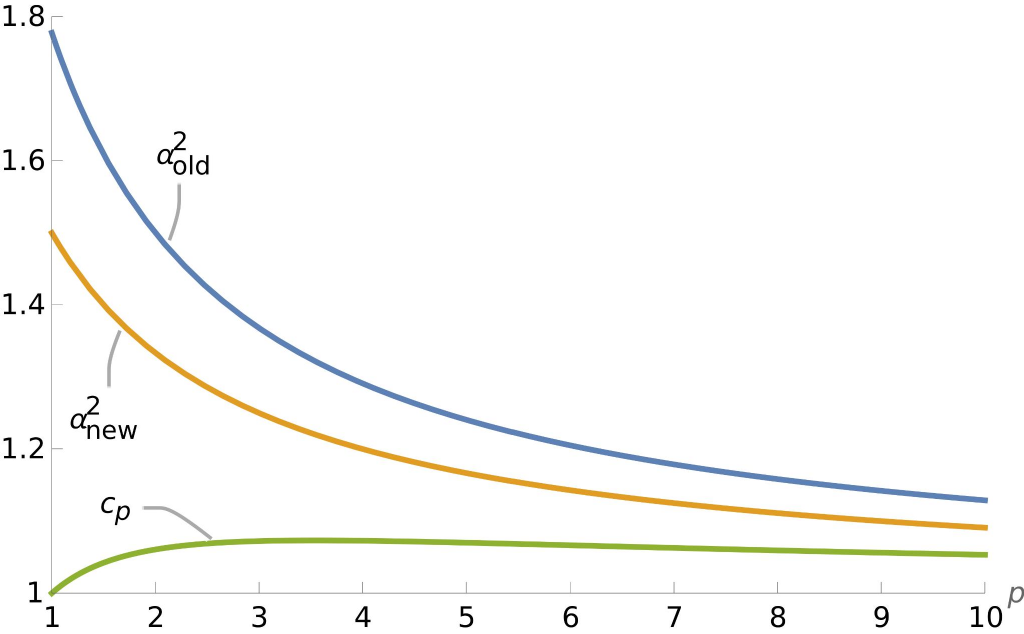}
\caption{Comparison of $\alpha_{\text{old}}^2$, $\alpha_{\text{new}}^2$ and $c_p$.}
\label{fig:alpha}
\end{figure}
The comparison between these parameters provides a clear quantitative view of the improvement gained by the optimized measure.

In \cite{NP24} we proved that the curse holds for all \(p\in(1,\infty)\) for non-negative 
quadrature formulas, the lower bounds for non-negative weights take the form
\begin{equation}\label{eq9}
N_q^{\mathrm{int},+}(\varepsilon,d)=N_p^{\mathrm{disc},+}(\varepsilon,d)
   \ge c_p^{d}(1-2\varepsilon),
   \qquad \varepsilon\in(0,1/2) . 
\end{equation}
We present the lower and the upper bounds together in one figure (Figure~\ref{fig:alpha}), even if we do not really compare the same quantities: For the lower bound we fix $\eps$, while in the upper bound we compare the asymptotic (in $N$) error.

\begin{remark}\rm
It would be interesting to derive new explicit upper bounds for 
even values of \(p\) by employing the optimized change of measure.
The computation is likely to be tedious because no closed-form expression 
is available for the optimal probability density when \(p\) differs from~1 or~2.
Nevertheless, such an approach would improve the main term
\[
\alpha_{\text{old}}(p)  = \left(\frac{2p+2}{p+2}\right)^{1/p},
\]
by replacing it with the smaller factor
\[
\alpha_{\text{new}}(p) = \left(\frac{p+2}{p+1}\right)^{1/2}.
\]
This follows from the asymptotic bound in Theorem~\ref{thm:opt:meas}.
\end{remark}

\begin{remark}\rm
For general finite $p \not=2$ we can only show asymptotic results. 
One might speculate that the asymptotic statement
\begin{equation*}
\nav_p(N,d,\varrho_d)
   \sim C_p\,\alpha_p^d\,N^{-1/2}
   \quad\text{as }N\to\infty,
\end{equation*}
(where the expectation is taken with respect to the density~\(\varrho\))
actually implies the non-asymptotic bound
\begin{equation*}
\nav_p(N,d,\varrho_d)
   \le C_p\,\alpha_p^d\,N^{-1/2}
   \qquad\text{for all }N,d.
\end{equation*}
This property is confirmed for \(p=2\) and \(\varrho=1\) (see \eqref{eq3} and \eqref{eq4}), 
but remains open for general \(p\).
\end{remark}

\begin{remark}\rm
For the periodic \(L_2\)-discrepancy (where the unit cube is considered as 
a torus and the test boxes are wrapped around), the situation differs markedly, and more is known.
The required number of points, i.e.,  the inverse periodic \(L_2\)-discrepancy, 
grows at least like \((3/2)^d\), and this lower bound holds both for QMC algorithms and 
for arbitrary positive quadrature formulas (see \cite[Theorem~2]{DHP2020}). 
Moreover, this bound is sharp, since a constructive upper bound using 
Korobov’s \(p\)-sets is possible (\cite[Remark~4]{DHP2020}). In the periodic case, a change of measure is neither required nor meaningful, 
because the space is translation invariant.

For general \(p\in[1,\infty)\) one can show using the technique of~\cite{HW12} that
\[
\nav_p^{\mathrm{per}}(N,d)
   \, \lesssim \,  \frac{\sqrt{2}}{\pi^{1/(2p)}} 
        \Gamma\!\left(\frac{p+1}{2}\right)^{1/p}
        \Bigl(\tfrac{2p+2}{p+2}\Bigr)^{d/p}N^{-1/2},
   \quad N\to\infty.
\]
This is the same asymptotic form as for the classical \( L_p^{\ast}\)-discrepancy~\eqref{uniform}.
Again, the curse of dimensionality (for random points) is most severe for small \(p\ge1\).
\end{remark}

\medskip

For our main result we consider again 
random i.i.d.\ points 
\(\bst_1,\ldots,\bst_N\in[0,1]^d\)
distributed according to a density \(\varrho_d\) and study
\[
A_{d,N}(f)=\frac{1}{N}\sum_{k=1}^N\frac{1}{\varrho_d(\bst_k)}\,f(\bst_k).
\]

The density \(\varrho_d\) depends on \(p\), and we 
restrict ourselves to product measures of the form
\(\varrho_d=\varrho^{\otimes d}:=\bigotimes_{j=1}^d\varrho\), with a one-dimensional density \(\varrho\) on \([0,1]\).
We follow the approach of \cite{HW12,St2010}.

\begin{thm}\label{thm:opt:meas}
For every \(p\in[1,\infty)\) and the optimal product density \(\varrho^{\ast,\otimes d}\) we have
\[
\lim_{N\to\infty} N^{1/2}\,\nav_p(N,d,\varrho_d^{\ast})
   \le
   \frac{\sqrt{2}}{\pi^{1/(2p)}}\,
   \Gamma\!\Bigl(\tfrac{p+1}{2}\Bigr)^{1/p}
   \Bigl(\tfrac{p+2}{p+1}\Bigr)^{d/2}.
\]
The one-dimensional optimal density is implicitly given as the solution of 
\begin{equation}\label{eq:rho}
t=\left(1-\varrho(t)\frac{p}{p+1}\right)^{2/p}\left(1+\varrho(t) \frac{2}{p+1}\right)
\end{equation}
for $t \in [0,1]$. For $p=1$ we have $$\varrho^{\ast}(t)=1+2 \cos \left(\frac{1}{3} 
\arccos(2t-1)+\frac{4 \pi}{3}\right)$$ and for $p=2$ we have 
$$\varrho^{\ast}(t)=\frac{3}{2}\sqrt{1-t}.$$
\end{thm}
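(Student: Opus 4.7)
The plan. I would combine a central--limit argument in the spirit of Steinerberger~\cite{St2010} and Hinrichs--Weyhausen~\cite{HW12} with the explicit solution of a one--dimensional variational problem. For i.i.d.\ points drawn from a product density $\varrho_d=\varrho^{\otimes d}$, write the discrepancy as a centred i.i.d.\ sum,
\[
\Delta_{\cP_N,\cA_{\varrho_d}}(\bsx)
   = \tfrac{1}{N}\sum_{k=1}^N \Bigl(\tfrac{\mathbf{1}_{[\bszero,\bsx)}(\bst_k)}{\varrho_d(\bst_k)}-x_1\cdots x_d\Bigr),
\]
whose summands have variance $\sigma(\bsx)^2=\prod_{j=1}^d G(x_j)-\prod_{j=1}^d x_j^2$, where $G(x):=\int_0^x \rd t/\varrho(t)$. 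The classical CLT gives $\sqrt{N}\,\Delta(\bsx)\to\mathcal{N}(0,\sigma(\bsx)^2)$ in distribution pointwise in $\bsx$; combined with uniform--integrability estimates on higher moments of the summands and dominated convergence in $\bsx$, this yields
\[
\lim_{N\to\infty} N^{p/2}\,\E[(L_{p,N}(\cP_N,\cA_{\varrho_d}))^p]
   =\tfrac{2^{p/2}}{\sqrt{\pi}}\Gamma\!\bigl(\tfrac{p+1}{2}\bigr)\int_{[0,1]^d}\sigma(\bsx)^p\rd\bsx.
\]

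I would then reduce the $d$--dimensional integral to a one--dimensional variational problem using $\sigma(\bsx)^{p}\le\prod_{j=1}^d G(x_j)^{p/2}$ (dropping the non--negative term $\prod x_j^2$), which produces $\int_{[0,1]^d}\sigma^p\rd\bsx\le(\int_0^1 G(x)^{p/2}\rd x)^d$. It remains to minimise $J[\varrho]:=\int_0^1 G(x)^{p/2}\rd x$ over $\varrho\ge0$ with $\int_0^1\varrho=1$. Since $J$ is strictly convex in $1/\varrho$ the minimiser is unique, and a Lagrange--multiplier computation yields the Euler--Lagrange condition
\[
\int_t^1 G(x)^{p/2-1}\rd x = c\,\varrho(t)^2\qquad(t\in[0,1])
\]
for a constant $c>0$. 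Differentiating in $t$ and using $G'=1/\varrho$ produces an ODE for $\varrho$ viewed as a function of $G$ that integrates explicitly to $\varrho=(M^{p/2}-G^{p/2})/(cp)$ with $M:=G(1)$; evaluating the Euler--Lagrange identity at $t=0$ fixes $c=M^{(p+2)/2}/(p+2)$.

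Introducing the rescaled variable $g:=G/M\in[0,1]$, the formula for $\varrho$ simplifies to $\varrho=\tfrac{p+1}{p}(1-g^{p/2})$, and integrating $G'=1/\varrho$ yields the parametrisation $t=\tfrac{p+2}{p}g-\tfrac{2}{p}g^{(p+2)/2}$. The normalisation $\int_0^1\varrho\,\rd t=1$, computed in the $g$--variable as $\tfrac{(p+2)^2}{p^2 M}\int_0^1(1-g^{p/2})^2\rd g$, forces $M=(p+2)/(p+1)$. Eliminating $g$ via $g^{p/2}=1-\tfrac{p}{p+1}\varrho$ inside $t=g\bigl(1+\tfrac{2}{p+1}\varrho\bigr)$ gives precisely~\eqref{eq:rho}; specialising $p=1$ (a depressed cubic solved by Cardano's formula) and $p=2$ (a quadratic in $\varrho$) yields the two explicit densities displayed in the theorem. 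A short change--of--variables computation gives $J[\varrho^{\ast}]=M^{p/2}/(p+1)=(p+2)^{p/2}/(p+1)^{(p+2)/2}$; taking the $1/p$--th power, multiplying by the normalisation factor $(p+1)^{d/p}$, and collecting the CLT constant produces the asymptotic bound stated in the theorem.

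I expect the main technical obstacle to be the moment--convergence step: justifying the exchange of $\lim_N$ with $\int_{[0,1]^d}$ requires uniform integrability of $|\sqrt{N}\,\Delta(\bsx)|^p$ together with an $\bsx$--integrable dominating function, and one must also control $1/\varrho_d$ well enough near the boundary to control higher moments of the summands. For $p$ bounded away from $1$ a direct moment calculation is enough, but for $p$ near $1$ one has to rely on the more delicate truncation and tail estimates that form the technical core of~\cite{HW12,St2010}. By contrast, the variational part is essentially algebraic once the substitution $g=G/M$ is identified.
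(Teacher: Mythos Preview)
Your proposal is correct and follows essentially the same route as the paper: a CLT/uniform--integrability argument \`a la \cite{HW12,St2010} to obtain the Gaussian moment constant, the bound $\sigma(\bsx)^p\le\prod_j G(x_j)^{p/2}$ to factorise, and then a one--dimensional calculus--of--variations problem for $J[\varrho]=\int_0^1 G^{p/2}\rd x$. The only noteworthy difference is in how the variational problem is solved: the paper treats $S=G$ as the unknown and, since the integrand does not depend explicitly on $x$, invokes the Beltrami first integral $S^{p/2}+2\lambda/S'=\mu$, whereas you vary directly in $\varrho$ to obtain $\int_t^1 G^{p/2-1}\rd x=c\,\varrho(t)^2$ and then differentiate. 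Both lead to the same relation $\varrho\propto M^{p/2}-G^{p/2}$, the same $M=G(1)=(p+2)/(p+1)$, and the same $J_{\min}=(p+1)^{-1}((p+2)/(p+1))^{p/2}$.

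One small slip to fix: evaluating your Euler--Lagrange identity at $t=0$ does \emph{not} determine $c$; both sides equal $M^p/(cp^2)$ automatically once $\varrho=(M^{p/2}-G^{p/2})/(cp)$ is substituted. What actually fixes $c=M^{(p+2)/2}/(p+2)$ is the parametrisation consistency $1=\int_0^1\rd t=\int_0^M\varrho\,\rd G$, and then the normalisation $\int_0^1\varrho\,\rd t=\int_0^M\varrho^2\,\rd G=1$ pins down $M=(p+2)/(p+1)$, exactly as you compute afterwards. With that correction your argument is complete and matches the paper's.
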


\begin{proof}
We follow the proof of \cite[Theorem 2]{HW12}, 
introducing positive integration weights \(a_k=1/(N\varrho_d(\bst_k))\)
with product density \(\varrho_d\).
We then study
\begin{align*}
\mathrm{av}_p(N,d,\varrho_d)^p
 &= \!\!\int_{[0,1]^{Nd}}\!
   \int_{[0,1]^d}
   \Biggl|
     \lambda_d([\bszero,\bsx))
       -\frac{1}{N}\sum_{k=1}^N
        \frac{1}{\varrho_d(\bst_k)}
        \mathbf{1}_{[\bszero,\bsx)}(\bst_k)
   \Biggr|^p
   \rd\bsx
   \prod_{j=1}^N\varrho_d(\bst_j)\rd\bst_j,
\end{align*}
and interchange the order of integration.

For fixed \(\bsx\) define the random variables  
\(X_i=\mathbf{1}_{[\bszero,\bsx)}(\bst_i)\varrho_d(\bst_i)^{-1}\).
Their mean and variance are
\[
\E[X_i]=\lambda_d([\bszero,\bsx))=:\lambda,
\qquad
V(X_i)
   = \int_{[0,1]^d}\!
       \mathbf{1}_{[\bszero,\bsx)}(\bst)
       \tfrac{1}{\varrho_d(\bst)}\,\rd\bst
     - \lambda^2.
\]
Let
\[
X_{N,\lambda}
   = \Bigl(\frac{1}{N}\sum_{i=1}^NX_i - \lambda\Bigr)\sqrt{N}.
\]
By the central limit theorem,
\[
X_{N,\lambda}\stackrel{\mathcal{D}}{\longrightarrow}
\Bigl(\int_{[0,1]^d}\!\!
   \mathbf{1}_{[\bszero,\bsx)}(\bst)
   \tfrac{1}{\varrho_d(\bst)}\,\rd\bst
   - \lambda^2\Bigr)^{1/2}Y,
\quad Y\sim N(0,1).
\]
Proceeding as in \cite{HW12}, we obtain
\begin{align}\label{av:pro_opt}
\lim_{N\to\infty} N^{p/2}\mathrm{av}_p(N,d,\varrho_d)^p
 &= \frac{2^{p/2}}{\pi^{1/2}}
    \Gamma\!\Bigl(\tfrac{p+1}{2}\Bigr)
    \!\!\int_{[0,1]^d}\!
    \Biggl(
      \int_{[0,1]^d}
      \mathbf{1}_{[\bszero,\bsx)}(\bst)
      \tfrac{1}{\varrho_d(\bst)}\,\rd\bst
      - \lambda^2
    \Biggr)^{p/2}\!\!\!\!\rd\bsx\nonumber\\
 &\le \frac{2^{p/2}}{\pi^{1/2}}
    \Gamma\!\Bigl(\tfrac{p+1}{2}\Bigr)
    \!\!\int_{[0,1]^d}\!
    \Biggl(
      \int_{[0,1]^d}
      \mathbf{1}_{[\bszero,\bsx)}(\bst)
      \tfrac{1}{\varrho_d(\bst)}\,\rd\bst
    \Biggr)^{p/2}\!\!\!\!\rd\bsx.
\end{align}
Because of the product structure of \(\varrho_d\),
\begin{eqnarray*}
\int_{[0,1]^d}\left(\int_{[0,1]^d} \mathbf{1}_{[\bszero,\bsx)}(\bst)\tfrac{1}{\varrho_d(\bst)}\rd\bst \right)^{p/2}\rd\bsx & = & 
\int_{[0,1]^d}\left(\int_{[0,1]^d} \prod_{j=1}^d \mathbf{1}_{[0,x_j)}(t_j)\tfrac{1}{\varrho(t_j)}\rd\bst \right)^{p/2}\rd\bsx\\
& = & \int_{[0,1]^d}\prod_{j=1}^d \left(\int_0^{x_j}\tfrac{1}{\varrho(t_j)}\rd t_j \right)^{p/2}\rd\bsx\\
& = & \prod_{j=1}^d \int_0^1 \left(\int_0^{x_j}\tfrac{1}{\varrho(t_j)}\rd t_j \right)^{p/2}\rd x_j\\
& = & \left(\int_0^1 \left(\int_0^x\frac{1}{\varrho(t)}\,\rd t\right)^{p/2}\rd x\right)^d.
\end{eqnarray*}

Hence we must minimize, for given \(p\), the functional
\[
J(\varrho)
   = \int_0^1 \left(\int_0^x\frac{1}{\varrho(t)}\,\rd t\right)^{p/2}\rd x
\]
subject to \(\varrho\ge0\) and \(\int_0^1\varrho(t)\,\rd t=1\).
Define \(S(x)=\int_0^x\varrho(t)^{-1}\rd t\), 
so that \(S'(x)=1/\varrho(x)\ge0\).

Introducing a Lagrange multiplier \(\lambda\),
we minimize
\[
\mathcal{L}(S)
   = \int_0^1\!
       \bigl(S(x)^{p/2}+\lambda\varrho(x)\bigr)\rd x
   = \int_0^1\!
       \Bigl(S^{p/2}+\frac{\lambda}{S'}\Bigr)\rd x.
\]
Since the integrand does not depend explicitly on \(x\),
the Euler-Lagrange equation reduces to the Beltrami identity
\[
S^{p/2}+\frac{2\lambda}{S'}=\mu,
\]
with constant \(\mu\) (\(\mu>S^{p/2}\) on \([0,1]\)).
Integrating gives
\begin{equation}\label{equ:Sx}
2\lambda x
   = \mu S(x) - \frac{2}{p+2}S(x)^{p/2+1}.
\end{equation}
Setting \(S_1=S(1)\) yields
\begin{equation}\label{idS11}
2\lambda
   = \mu S_1 - \frac{2}{p+2}S_1^{p/2+1}.
\end{equation}

The normalization constraint \(\int_0^1\varrho(t)\,\rd t=1\) becomes
\[
1
   = \int_0^{S_1}
       \Bigl(\frac{\mu-v^{p/2}}{2\lambda}\Bigr)^2\!\rd v
   = \frac{1}{(2\lambda)^2}
       \biggl(
         \mu^2S_1
         - \frac{4\mu}{p+2}S_1^{p/2+1}
         + \frac{S_1^{p+1}}{p+1}
       \biggr),
\]
implying
\begin{equation}\label{idS2}
(2\lambda)^2
   = \mu^2S_1
     - \frac{4\mu}{p+2}S_1^{p/2+1}
     + \frac{S_1^{p+1}}{p+1}.
\end{equation}

Changing variables in the objective gives
\[
J(\varrho)
   = \frac{1}{2\lambda}
     \Bigl(
       \frac{2\mu}{p+2}S_1^{p/2+1}
       - \frac{1}{p+1}S_1^{p+1}
     \Bigr).
\]
After eliminating \(\lambda\) via \eqref{idS11}–\eqref{idS2} 
and some algebra, one obtains
\begin{equation}\label{def:mu:lam}
\mu
   = \frac{S_1^{p/2}}{p+2}
       \Bigl(
         2 + \frac{p}{\sqrt{p+1}\sqrt{S_1-1}}
       \Bigr),
\qquad
2 \lambda
   = \frac{p}{(p+2)\sqrt{p+1}}
       \frac{S_1^{p/2+1}}{\sqrt{S_1-1}}.
\end{equation}
Substituting these into \(J(\varrho)\) gives
\begin{equation}\label{idJ2}
J(\varrho)
   = \frac{S_1^{p/2}}{p+2}
       \Bigl(
         2 - \frac{p}{\sqrt{p+1}}\sqrt{S_1-1}
       \Bigr).
\end{equation}
Minimization of \eqref{idJ2} yields the optimal
\begin{equation}\label{S1opt}
S_1^{\ast}
   = \frac{p+2}{p+1},
\qquad
J_{\min}
   = \frac{1}{p+1}
       \Bigl(\frac{p+2}{p+1}\Bigr)^{p/2}.
\end{equation}
Substituting \(J_{\min}\) into~\eqref{av:pro_opt}, taking the \(p\)th root,
and dividing by the initial error \((p+1)^{-d/p}\) produces the claimed bound.
\medskip

It remains to compute the optimal one-dimensional densities $\varrho^{\ast}$. 
Inserting $S_1=S_1^{\ast}$ from \eqref{S1opt} into \eqref{def:mu:lam} yields $\mu=(S_1^{\ast})^{p/2}$ 
and $2 \lambda= \frac{p}{p+1} (S_1^{\ast})^{p/2}$. Inserting into \eqref{equ:Sx} yields the equation 
\begin{equation}\label{eq:Sx:p1}
\frac{p}{p+1} (S_1^{\ast})^{p/2} x= (S_1^{\ast})^{p/2} S(x)-\frac{2}{p+2} S(x)^{p/2+1}.
\end{equation}
Differentiating with respect to $x$ yields $$\frac{1}{\varrho(x)}=S'(x)=\frac{\frac{p}{p+1} 
(S_1^{\ast})^{p/2}}{(S_1^{\ast})^{p/2}-S(x)^{p/2}}.$$ Thus, $$S(x)=S_1^{\ast} \left(1-\varrho(x) \frac{p}{p+1}\right)^{2/p}.$$ 
Re-inserting into \eqref{eq:Sx:p1} and using $S_1^{\ast}=\frac{p+2}{p+1}$ leads, 
after some lines of tedious algebra, to the equation \eqref{eq:rho}.

For $p=1$ equation \eqref{eq:rho} reduces to $$4 t=(2-\varrho(t))^2(1+\varrho(t)),$$ which we solve 
for $\varrho=\varrho(t)$, $t \in [0,1]$. Expanding gives the monic form $$\varrho^3-3 \varrho^2+4(1-t)=0.$$ 
With the substitution $\varrho=y+1$ we obtain the cubic equation $$y^3-3y+(2-4t)=0,$$ which is of the form 
$y^3+py+q=0$ with $p=-3$ and $q=2-4t$. For $t \in (0,1)$ the discriminant is positive and all three roots 
are real (casus irreducibilis) which are according to the trigonometric form of Cardano's solution given 
as $$y=2 \cos\left(\frac{1}{3} \arccos(2t-1)+\frac{2 \pi k}{3}\right)\quad \mbox{for $k\in \{0,1,2\}$.}$$ 
Choosing the branch $k=2$ in order to satisfy $\varrho(0)=2$ and $\varrho(1)=0$ yields 
$$\varrho^{\ast}(t)=y+1=1+2 \cos \left(\frac{1}{3} \arccos(2t-1)+\frac{4 \pi}{3}\right),$$ as claimed. 

For $p=2$ equation \eqref{eq:rho} reduces to $t=1-\frac{4}{9} \varrho(t)^2$, 
whose solution is easily seen to be $$\varrho^{\ast}(t)=\frac{3}{2}\sqrt{1-t},$$ as claimed.
\end{proof}

\begin{remark}\rm
Explicit closed forms of the optimal density $\varrho^{\ast}$ are available only 
for \(p=1\) and \(p=2\). The graphs are shown in Figure~\ref{f1}.
\begin{figure}
\centering
\includegraphics[width=0.6 \textwidth]{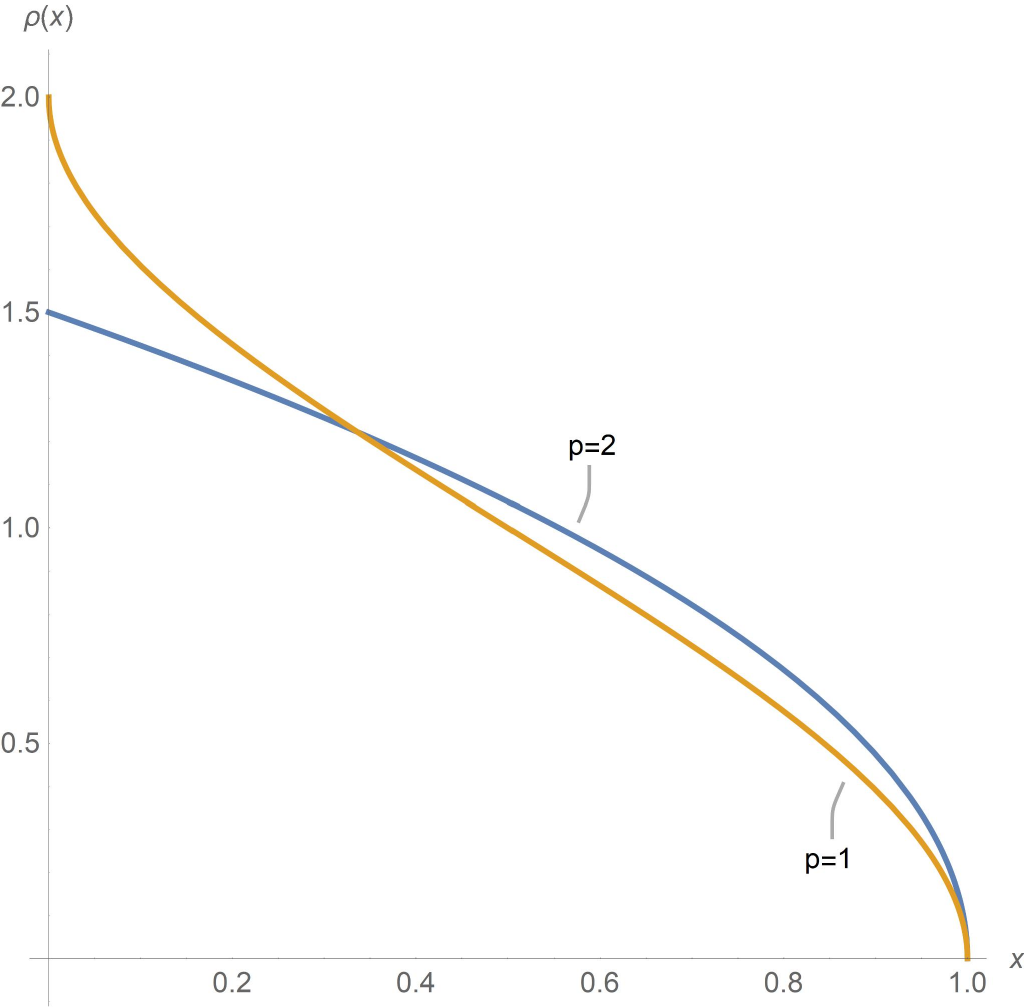}
\caption{Optimal density $\varrho^*$ for $p=1$ and $p=2$.}
\label{f1}
\end{figure}
Nevertheless, the density function \(\varrho^{\ast}\) can be computed numerically 
for any given \(p\) by solving equation \eqref{eq:rho}.
For illustration, Figure~\ref{f2} shows the numerically computed optimal density for \(p=10\) and \(p=100\).
\begin{figure}
\centering
\includegraphics[width=0.6 \textwidth]{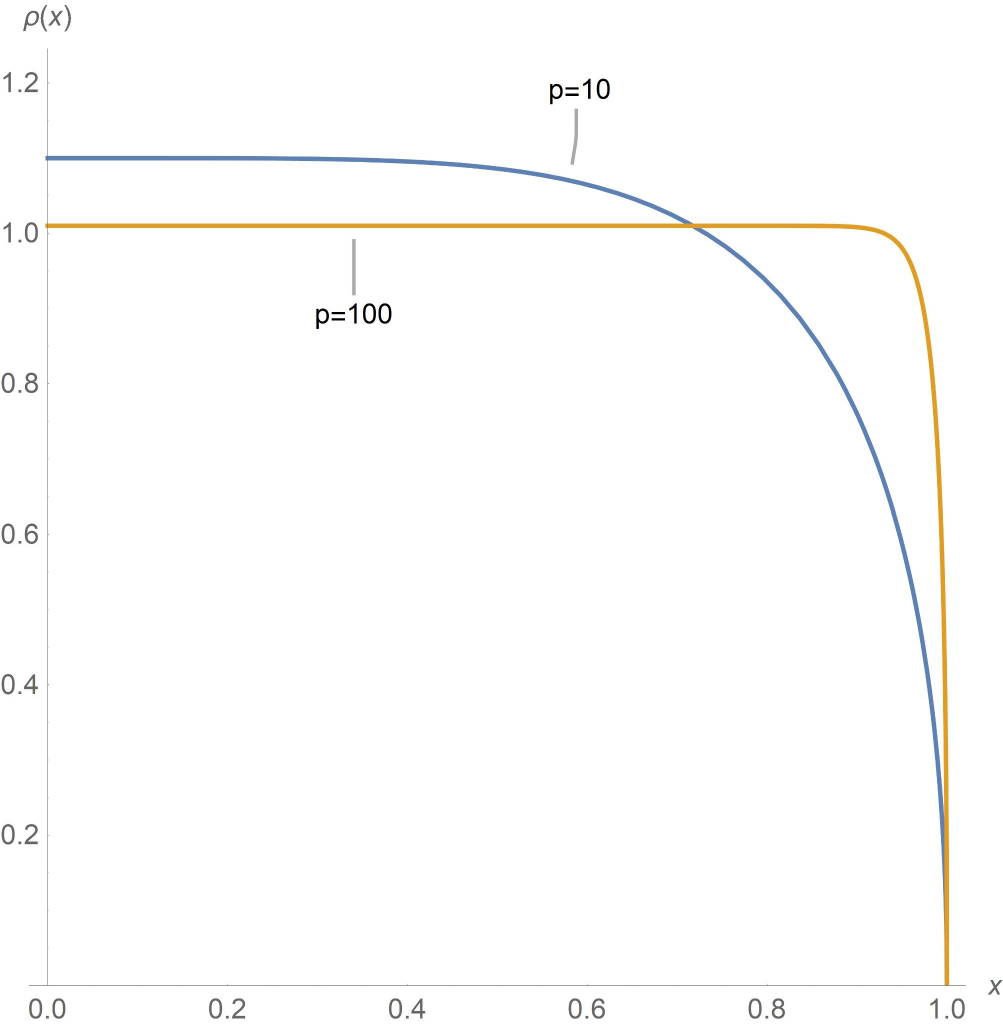}
\caption{Optimal densities $\varrho^*$ for $p=10$ and $p=100$.}
\label{f2}
\end{figure}
\end{remark}

\section{Stability, and open problems}\label{stability}

We conclude with a discussion of stability, which in our context must be understood 
relative to the function space \(F_{d,q}\) rather than in the classical sup-norm sense.

QMC rules $A_{d,N}^{\ast}$ (see Section~\ref{sec:back}) integrate constant functions exactly and are traditionally regarded as stable because
\[
\| A_{d,N}^{\ast}\|_\infty = 1.
\]
Here the operator acts on \(C([0,1]^d)\), and its norm coincides with that of the integration operator itself. Considering linear  quadrature formulas of the form
\[
A_{d,N}(f)=\sum_{k=1}^N a_k f(\bst_k),
\]
a frequently used indicator of stability is
\begin{equation}\label{stab-old}
\|A_{d,N}\|_\infty=\sum_{k=1}^N|a_k|,
\end{equation}
which ideally equals~1 (or is close to~1) for a well-balanced formula.

In the present framework, however, the measure~\eqref{stab-old} 
is not appropriate because we work in \(F_{d,q}\).
Note that the integration operator on \(F_{d,q}\) has norm \((p+1)^{-d/p}\), 
which is exponentially small in \(d\).
A stable quadrature formula for \(F_{d,q}\) should therefore also have 
an exponentially small operator norm, and there is no requirement that 
it integrates  constants exactly, since constant functions are not contained in \(F_{d,q}\).

It can be verified that the norm of a QMC rule \(A_{d,N}^{\ast}\) 
in \(F_{d,q}\) depends strongly on the node set.
The inequality
\[
0\le\|A_{d,N}^{\ast}\|_{F_{d,q}}\le1
\]
is sharp: if each point \(\bst_k\) has at least one coordinate equal to~1 we obtain the lower bound,
while for \(\bst_k=\boldsymbol{0}\) for all \(k\) we obtain the upper bound.
Since \(1\) is exponentially larger than the initial error \((p+1)^{-d/p}\),
some QMC algorithms possess very large operator norms and are thus unstable on \(F_{d,q}\);
naturally, they also exhibit large integration errors.

In contrast, the algorithms considered in this paper use comparatively large 
(non-negative) weights, which may lead to arbitrarily large values of \(\|A_{d,N}^+\|_\infty\).
Consequently, these quadrature formulas are not stable in the classical sense.
Nevertheless, they achieve small integration errors, and their
\emph{\(F_{d,q}\)-stability}, measured by \(\|A_{d,N}^+\|_{F_{d,q}}\), remains small.
Thus, in our setting, algorithms \(A_{d,N}^+\) with small integration error 
\(I_d-A_{d,N}^+\) automatically satisfy small \(F_{d,q}\)-norms by the triangle inequality.

As a simple example, for \(p=2\) and \(d=1\) the optimal one-point rule is 
\[
A_{1,1}(f)=\tfrac{2}{3}f(\tfrac{1}{3}),
\]
with worst-case error \(1/\sqrt{27}\).
If the weight is fixed to \(a_1=1\), the optimal node is \(t_1=\tfrac{1}{2}\), 
and the error equals \(1/\sqrt{12}\).
With the optimal density \(\varrho^{\ast}\) one obtains
\[
\sup_{f\in F_{1,2}}\frac{f(t)}{\varrho^{\ast}(t)}=\frac{2}{3}\qquad \mbox{for } t\in[0,1),
\]
so all possible contributions \(\tfrac{f(t_k)}{N \varrho^{\ast}(t_k)}\) lie in the same interval \([-2/(3N),2/(3N)]\);
that is, each function value contributes uniformly to the result.

\begin{remark}\rm 
A practical issue arises when some of the weights \(a_k\) are large:
if the available data are noisy or the function \(f\) does not belong to \(F_{d,q}\),
the algorithm may behave poorly.
This risk can be mitigated by replacing the optimal density \(\varrho^{\ast}\) with 
a slightly regularized version that stays bounded away from zero by a positive constant.
\end{remark}

\paragraph{Open problems.}
Several questions remain open and suggest directions for further work:

\begin{enumerate}
\item Can the asymptotic bounds \eqref{uniform}, \eqref{new-density}, 
\eqref{eq7}, and \eqref{eq8}, be converted into explicit non-asymptotic bounds valid for all~\(N\)?

\item Can the upper bounds be improved when arbitrary quadrature 
formulas are permitted, i.e., when some weights \(a_k\) may be negative?

\item Can one close the gap between upper and lower bounds by 
establishing sharper lower bounds, particularly for \(p<2\)?

\item Does the curse of dimensionality hold for \(p=1\)?
\end{enumerate}

\vspace{0.5cm}
\noindent{\bf Author’s Address:}\\[1ex]
\noindent
Erich Novak, Institut f\"ur Mathematik, FSU Jena, Inselplatz 5, 07743 Jena, Germany. 
E-mail: \texttt{erich.novak@uni-jena.de}\\[1ex]
\noindent
Friedrich Pillichshammer, Institut f\"ur Finanzmathematik und Angewandte Zahlentheorie, JKU Linz, 
Altenbergerstra{\ss}e 69, A-4040 Linz, Austria. E-mail: \texttt{friedrich.pillichshammer@jku.at}

\end{document}